\def\N{\mathbb{N}}
\def\Z{\mathbb{Z}}
\def\Q{\mathbb{Q}}
\def\R{\mathbb{R}}
\newcommand{\Rnn}{\R_{\geq 0}}
\newcommand{\sDMO}[1]{\expandafter\DeclareMathOperator\csname#1\endcsname{#1}}
\newcommand{\MakeCategoryName}[1]{%
    \expandafter\DeclareMathOperator\csname#1\endcsname{\mathsf{#1}}
}
\newcommand{\KVectc}{\mathop{\Vect_K^\omega}}
\newcommand{\QVectfin}{\mathop{\Vect_\Q^{\fin}}}
\newcommand{\KVectfin}{\mathop{\Vect_K^{\fin}}}
\newcommand{\htpycat}[1]{#1_{\mathrm{h}}}
\newcommand{\fin}{\mathrm{fin}}
\newcommand{\inv}{^{-1}}
\newcommand{\setZeroTo}[1]{\{0,\ldots,#1\}}
\newcommand{\setOneTo}[1]{\{1,\ldots,#1\}}
\newcommand{\oneInF}[1]{1_{#1}}
\def\fa#1{\forall_{#1}\quad}
\def\args{\,\cdot\,}
\newcommand{\after}{\circ}
\newcommand{\isom}{\cong}
\newcommand{\natTo}{\Rightarrow}
\newtheoremstyle{ggt}{}{}{\itshape}{}{\sffamily\bfseries}{.}{ }{}
\newtheoremstyle{ggtdefinition}{}{}{}{}{\sffamily\bfseries}{.}{ }{}
\theoremstyle{ggt}
\newtheorem{thm}{Theorem}[section]
\newtheorem{lemma}[thm]{Lemma}
\newtheorem{cor}[thm]{Corollary}
\newtheorem{prop}[thm]{Proposition}
\theoremstyle{ggtdefinition}
\newtheorem{defi}[thm]{Definition}
\newtheorem{example}[thm]{Example}
\newtheorem{question}[thm]{Question}
\newtheorem{remark}[thm]{Remark}
\newtheorem{setup}[thm]{Setup}
\let\origthmhead=\thmhead
\renewcommand{\thmhead}[3]{%
\origthmhead{#1}{#2}{#3}%
\belowpdfbookmark{#1\@ifnotempty{#1}{ }#2\thmnote{ (#3)}}{#1#2}%
}
\newcommand{\lone}{\texorpdfstring{$\ell^1$}{l1}}
\newcommand{\pref}[1]{(\ref{#1})}
\newcommand{\subref}[2]{\ref{#1}\,\pref{#1#2}}
\def\blfootnote{\xdef\@thefnmark{}\@footnotetext}
\def\draftinfo{}
\date{\today%
  \protect\blfootnote{\copyright{\ C.~L\"oh, J.~Witzig~2022}. 
    This work was supported by the CRC~1085 \emph{Higher Invariants} 
    (Universit\"at Regensburg, funded by the DFG).
    \draftinfo}}
\begin{document}

\title{Universal finite functorial semi-norms}
\author{Clara L\"oh, Johannes Witzig}

\maketitle

\thispagestyle{empty}

\begin{abstract}
  Functorial semi-norms on singular homology measure the ``size'' of
  homology classes. A geometrically meaningful example is the
  $\ell^1$-semi-norm.  However, the $\ell^1$-semi-norm is not
  universal in the sense that it does not vanish on as few classes as
  possible.  We show that universal finite functorial semi-norms do
  exist on singular homology on the category of topological spaces
  that are homotopy equivalent to finite CW-complexes. Our arguments
  also apply to more general settings of functorial semi-norms.
\end{abstract}

\section{Introduction}

A functorial semi-norm on a functor~$F \colon C \to \Vect_K$ to vector
spaces over a normed field~$K$ is a lift of~$F$ to a functor~$C \to
\snVect_K$ to the category of semi-normed vector spaces over~$K$
(Definition~\ref{def:ffsn}). A functorial semi-norm on~$F$ is called
universal if it vanishes on as few classes as possible among all
functorial semi-norms on~$F$ (Definition~\ref{def:uffsn}).

A geometrically meaningful example of a functorial semi-norm is the
$\ell^1$-semi-norm on singular homology~\cite{gromov82}, which
measures the ``size'' of homology classes in terms of singular
simplices and has applications to rigidity of
manifolds~\cite{gromov82,bcg,lafontschmidt,connellwang}. It is known
that the $\ell^1$-semi-norm is \emph{not} universal in high
degrees~\cite{fauserloeh19} (see also Example~\ref{exa:nonuniversal})
and it is thus natural to ask whether
universal finite functorial semi-norms exist on singular
homology~\cite[Question~4.2]{fauserloeh19}. In the present article, we
answer this question affirmatively on the category of spaces homotopy
equivalent to finite CW-complexes (Corollary~\ref{cor:singhomfin}).

More generally, using a suitable diagonalisation technique, we prove
the following general existence result (Section~\ref{sec:exproof}):

\begin{thm}\label{thm:genuffsn}
  Let $C$ be a category that admits a skeleton with
  at most countably many objects. Let $K$ be a normed
  field and let $F \colon C \to \Vect_K$ be a functor.
  \begin{enumerate}
  \item
    \label{thm:genuffsn:countable}
    If $K$ is countable and if $F$ maps to~$\KVectc$,
    then $F$ admits a universal finite functorial semi-norm.
  \item
    \label{thm:genuffsn:fin}
    If $F$ maps to~$\KVectfin$, then $F$ admits a universal
    finite functorial semi-norm.
  \end{enumerate}
\end{thm}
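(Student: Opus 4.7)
The plan is to realise the desired universal finite functorial semi-norm as a weighted countable sum
$\sigma = \sum_{n \in \N} \lambda_n \tau_n$
of well-chosen finite functorial semi-norms $\tau_n$ on $F$, with positive weights $\lambda_n$ tuned by a diagonal argument so that the sum converges pointwise on every $F(c)$ while still retaining enough witnesses to be universal. Since a functorial semi-norm transports uniquely along an equivalence of categories, I first replace $C$ by a countable skeleton and enumerate its objects as $c_1, c_2, \ldots$

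The first step is to produce a \emph{countable} family of ``test data'' on which $\sigma$ will be forced to be nonzero whenever possible. In case~\pref{thm:genuffsn:countable}, countability of $K$ and countable dimension of each $F(c_m)$ make $F(c_m)$ itself a countable set, so enumerating $\bigsqcup_m F(c_m)$ yields a countable family of test classes $(\alpha_n)_n$; for each $\alpha_n$ that is detected by some finite functorial semi-norm on $F$, I pick one such $\tau_n$, and otherwise set $\tau_n = 0$. In case~\pref{thm:genuffsn:fin}, each $F(c_m)$ is finite-dimensional, so the filtered family of subspaces $\ker(\tau) \cap F(c_m)$ (ranging over all finite functorial semi-norms $\tau$ on $F$) has an intersection realised by finitely many of its members; summing those finitely many finite functorial semi-norms on $F$ gives, for every $m$, a single finite functorial semi-norm $\tau_m$ on $F$ whose kernel on $F(c_m)$ equals $\bigcap_\tau \ker(\tau) \cap F(c_m)$. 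In either case the sequence $(\tau_n)_n$ is countable.

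The second step, the heart of the argument, is to choose the weights $\lambda_n > 0$ so that the formal sum converges to a \emph{finite} functorial semi-norm. In case~\pref{thm:genuffsn:countable} I take $\lambda_n$ so small that $\lambda_n \tau_n(\alpha_k) \leq 2^{-n}$ for all $k \leq n$; then, for each fixed $\alpha_k$, the tail $\sum_{n \geq k} \lambda_n \tau_n(\alpha_k)$ is dominated by a geometric series and the head $\sum_{n < k}$ is a finite sum, so $\sigma(\alpha_k) < \infty$, and since every class of $F(c_m)$ appears as some $\alpha_k$ this gives finiteness on all of $F$. In case~\pref{thm:genuffsn:fin} I instead use that the restriction of each $\tau_n$ to the finite-dimensional space $F(c_m)$ is bounded by some constant $C_{m,n}$ times any fixed norm on $F(c_m)$; choosing $\lambda_n \leq 2^{-n}/(1 + \sum_{m \leq n} C_{m,n})$ ensures that for each $m$ the restriction $\sigma|_{F(c_m)}$ is bounded by a finite multiple of a norm on $F(c_m)$, hence is finite everywhere.

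Subadditivity, absolute homogeneity, and contractivity under morphisms of $C$ are all preserved under positive countable sums, so $\sigma$ is indeed a functorial semi-norm; and universality is automatic because any class $\alpha$ detected by some finite functorial semi-norm satisfies $\sigma(\alpha) \geq \lambda_n \tau_n(\alpha) > 0$ for the corresponding index $n$. I expect the main obstacle to be the interplay between the two demands in the second step: forcing global convergence while still recording every detectable class. This is precisely where the two hypotheses enter as dual mechanisms that keep the test data countable and uniformly controllable on each $F(c_m)$; without either one, a non-diagonalisable amount of witnesses would be needed.
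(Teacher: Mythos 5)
Your argument is correct, and its outer structure matches the paper's: reduce to a countable skeleton, produce countably many witness semi-norms (one per $F$-element in the countable case, one per object via a dimension-descent argument in the finite-dimensional case --- your ``intersection realised by finitely many members'' step is exactly the paper's Lemma~\ref{lem:uffsnX}), and then merge them into a single finite functorial semi-norm carrying all of them. Where you genuinely diverge is in the merging step. The paper never forms a sum of semi-norms: it works with \emph{generated} semi-norms $|\cdot|_v$ coming from functions $v$ on a countable class $S$ of $F$-elements via $S$-representations, and diagonalises at the level of these functions by setting $v(\alpha_n) := \max\{v_j(\alpha_n) \mid j \le n\}$ (Proposition~\ref{prop:carrySeqGenerated}, Corollary~\ref{cor:carrySeqSn}); the carrying property is then an estimate $|\alpha|_v \ge Q\,|\alpha|_{v_m}$ with $Q>0$ depending on $m$. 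You instead take $\sigma = \sum_n \lambda_n \tau_n$ with weights decaying fast enough (diagonally in $n$) that the series converges on every $F(c_m)$ --- using countability of each $F(c_m)$ in case~(1) and the bound $\tau_n \le C_{m,n}\|\cdot\|_1$ on finite-dimensional spaces in case~(2), both of which are sound. Your route is more elementary and bypasses the whole $S$-representation machinery of Section~\ref{subsec:genfsn}; what the paper's formulation buys in exchange is that the universal semi-norm it produces is itself a generated semi-norm on a prescribed generating class, which is what makes the reformulation of universality in terms of growth of generating functions on URC-manifolds (Remark~\ref{rem:URCgrowth}) possible. One small point worth making explicit in your write-up: the termwise functoriality, subadditivity and homogeneity of a convergent nonnegative series of semi-norms, which you assert in one line, is indeed routine but is the place where finiteness of the sum must already be in hand.
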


Here, $\KVectfin$ and $\KVectc$ denote the categories of $K$-vector
spaces of finite or countable dimension, respectively.  
If the countability assumption on the skeleton is dropped, then in
general there does not need to exist a universal finite functorial
semi-norm (Section~\ref{sec:nonuffsn}).

Instantiating Theorem~\subref{thm:genuffsn}{:fin} to singular homology, we
obtain (Section~\ref{subsec:proofsinghomfin}):

\begin{cor}\label{cor:singhomfin}
  Let $d \in \N$ and let $K$ be a normed field (e.g., $\R$).  Then the
  singular homology functor~$H_d(\args;K)$ admits a universal finite
  functorial semi-norm on the category of all topological spaces that
  are homotopy equivalent to finite CW-complexes.
\end{cor}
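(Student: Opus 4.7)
The plan is to reduce Corollary~\ref{cor:singhomfin} to Theorem~\subref{thm:genuffsn}{:fin} by passing to the homotopy category. Let $C$ denote the category of topological spaces homotopy equivalent to finite CW-complexes, and let $\htpycat{C}$ denote the associated homotopy category (same objects, homotopy classes of maps). Since singular homology is homotopy invariant, the functor $H_d(\args;K)$ factors through the canonical quotient $\pi \colon C \to \htpycat{C}$ as $\bar F \after \pi$ with $\bar F \colon \htpycat{C} \to \Vect_K$. A functorial semi-norm on $H_d(\args;K)$ assigns a semi-norm to each $H_d(X;K)$ such that every continuous map acts as a 1-Lipschitz linear map; since homotopic maps induce the same map on~$H_d$, this 1-Lipschitz condition depends only on the homotopy class, so functorial semi-norms on $H_d(\args;K)$ are in canonical bijection with functorial semi-norms on~$\bar F$. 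In particular, a universal one on~$\bar F$ produces a universal one on~$H_d(\args;K)$.

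Next I would verify the two hypotheses of Theorem~\subref{thm:genuffsn}{:fin} for~$\bar F$. That $\bar F$ takes values in~$\KVectfin$ is immediate: the singular homology of a finite CW-complex is computed by a bounded complex of finite-dimensional vector spaces (the cellular chain complex), and finite-dimensionality is a homotopy invariant. Verifying that $\htpycat{C}$ admits a countable skeleton amounts to showing that there are at most countably many homotopy equivalence classes of finite CW-complexes. I would prove this by induction on dimension: up to homeomorphism, there are countably many $0$-dimensional finite CW-complexes; any finite CW-complex of dimension~$n$ is obtained from some finite CW-complex $X^{(n-1)}$ of dimension $\leq n-1$ by attaching finitely many $n$-cells along a map $\bigsqcup S^{n-1} \to X^{(n-1)}$, and the homotopy type of the result depends only on the homotopy class of the attaching map, which is an element of a finite power of $\pi_{n-1}(X^{(n-1)})$. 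Since each such homotopy group is countable, every inductive step contributes only countably many new homotopy types, and the total count is countable.

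With the hypotheses verified, Theorem~\subref{thm:genuffsn}{:fin} produces a universal finite functorial semi-norm on~$\bar F$, which transports via the bijection from the first paragraph to a universal finite functorial semi-norm on $H_d(\args;K) \colon C \to \Vect_K$.

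The main technical obstacle is the countability of the skeleton of~$\htpycat{C}$, which rests on the classical but non-elementary fact that homotopy groups of finite CW-complexes are countable. I would establish this by noting that $\pi_1$ of a finite CW-complex is finitely presented, so its universal cover is a CW-complex with countably many cells, and then invoking that $[Y,X]$ is countable whenever $Y$ is a finite CW-complex and~$X$ has countably many cells. Everything else in the argument is a routine unwinding of definitions.
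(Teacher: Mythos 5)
Your proposal is correct and follows the same overall route as the paper: pass to the homotopy category $\htpycat{C}$ (observing that functorial semi-norms on $H_d(\args;K)$ are literally the same data as functorial semi-norms on the induced functor, since the $1$-Lipschitz condition factors through homotopy classes of maps), check that the values are finite-dimensional and that $\htpycat{C}$ has a countable skeleton, and apply Theorem~\subref{thm:genuffsn}{:fin}. The one place where you genuinely diverge is the countability of homotopy types of finite CW-complexes. The paper (Remark~\ref{rem:countingCW}) gets this cheaply: every finite CW-complex is homotopy equivalent to a finite simplicial complex, and finite simplicial complexes are determined up to isomorphism by finite combinatorial data, so there are only countably many of them. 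You instead run an induction over skeleta, using that homotopic attaching maps (and homotopy equivalent targets) yield homotopy equivalent complexes, and that the relevant sets of homotopy classes of attaching maps are countable because homotopy groups of finite CW-complexes are countable. This works, but it leans on a heavier input (countability of $\pi_{n-1}$ via universal covers and $[Y,X]$ for countable $X$, which itself is usually proved by simplicial approximation), and you should phrase the attaching maps in terms of free rather than based homotopy classes --- this does not affect countability, since free classes are a quotient of based ones over the path components. The simplicial-complex argument is the more economical of the two; your inductive argument buys nothing extra here, though it is a legitimate alternative. Everything else matches the paper's proof.
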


In degrees~$d \in \{0,1\}$, it is easy to determine explicit universal
finite functorial semi-norms on~$H_d(\args;\R)$
(Example~\ref{exa:deg01}). However, the following problems remain
open:

\begin{question}\label{q:explicit}
  What is the geometric meaning of universal finite functorial
  semi-norms on singular homology? Are there ``nice'' examples, at
  least in degrees~$2$ and~$3$?
\end{question}

We reformulate Question~\ref{q:explicit} in more concrete terms
in Remark~\ref{rem:URCgrowth}.

\begin{question}
  Let $d \in \N_{\geq 2}$. Does Corollary~\ref{cor:singhomfin} also
  hold for singular homology on the category of \emph{all} topological
  spaces?
\end{question}

\begin{remark}[a comment on sets]
  As underlying set theory, we use NBG-style sets and classes; this
  leads to smallness assumptions in some places. Of course, similarly,
  one could also work in other types of foundations.
\end{remark}
  
\subsection*{Organisation of this article}

We start by recalling the notion of (universal) finite functorial
semi-norms as well as basic examples and constructions in
Section~\ref{sec:ffsn}.  In Section~\ref{sec:equiv}, we show that
universality is compatible with equivalences of categories.  The key
construction for universality is presented in
Section~\ref{sec:vanishingloci}, which allows us to prove the
existence results in Theorem~\ref{thm:genuffsn} and
Corollary~\ref{cor:singhomfin} in Section~\ref{sec:exproof}. Moreover,
Section~\ref{sec:nonuffsn} contains an example of a functor that does
not admit a universal finite functorial semi-norm.

\section{Finite functorial semi-norms}\label{sec:ffsn}

We recall basic notions and examples for finite functorial semi-norms,
with a focus on the case of singular homology.

We use the following terminology: Let $K$ be a normed field (e.g., $\Q$
or~$\R$ with the standard norm).
A \emph{semi-norm} on a $K$-vector space~$V$ is a
function~$|\cdot|\colon V\to\R_{\geq 0}\cup\{\infty\}$ that satisfies
\begin{itemize}
    \item $|0| = 0$, the
    \item triangle-inequality, i.e., for all $x,y\in V$ we have
        $|x+y| \leq |x| + |y|$, and 
    \item homogeneity, i.e., for all $a\in K\setminus\{0\}$
        and all $x\in V$ we have $|a * x| = |a| * |x|$
\end{itemize}
(with the usual conventions regarding~$\infty$). A semi-norm is
\emph{finite} if $\infty$ is not attained.
We denote the category of $K$-vector spaces by~$\Vect_K$ and
the category of semi-normed $K$-vector spaces with norm non-increasing
$K$-homomorphisms by~$\snVect_K$.

\begin{setup}\label{setup:F}
    Let $C$ be a category, let $K$~be a normed field, and let
    $F\colon C\to\Vect_K$ be a functor.
\end{setup}

\subsection{Functorial semi-norms}

\begin{defi}[\texorpdfstring{$F$}{F}-element]
    In the situation of Setup~\ref{setup:F},
    an \emph{$F$-element} is a pair $(X,\alpha)$ where
    $X\in\Ob(C)$ and $\alpha\in F(X)$.
    We often suppress~$X$ in the notation and simply say
    that \emph{$\alpha$~is an $F$-element}.
\end{defi}

\begin{defi}[(finite) functorial semi-norm]\label{def:ffsn}%
    We consider the situation of Setup~\ref{setup:F}.
    A \emph{functorial semi-norm on~$F$} is a
    lift of~$F$ to a functor $\sigma\colon C\to\snVect_K$.
    Explicitly, the latter consists of a semi-norm~$|\cdot|_\sigma$
    on~$F(X)$ for all objects~$X$ of~$C$, such that for all
    morphisms~$f\colon X\to Y$ of~$C$ and all $\alpha\in F(X)$ we have
    \[ \bigl|F(f)(\alpha)\bigr|_\sigma \leq |\alpha|_\sigma  . \]
    A functorial semi-norm on~$F$ is \emph{finite} if $|\cdot|_\sigma$ is
    finite on~$F(X)$ for all~$X$.
\end{defi}

\begin{example}[trivial functorial semi-norm]
    Every functor in Setup~\ref{setup:F} admits the \emph{trivial
      functorial semi-norm}, i.e., the semi-norm that vanishes on
    every input.
\end{example}

\begin{defi}[carries] \label{def:carries}
    In the situation of Setup~\ref{setup:F}, let $\sigma$ and~$\tau$
    be functorial semi-norms on~$F$.
    Then \emph{$\sigma$~carries~$\tau$} if for all
    $F$-elements~$\alpha$, we have
    \[ |\alpha|_\sigma = 0 \implies |\alpha|_\tau = 0  . \]
\end{defi}

\begin{defi}[universal finite functorial semi-norm]\label{def:uffsn}
    In the situation of Setup~\ref{setup:F}, a \emph{universal
    finite functorial semi-norm on~$F$} is a finite functorial
    semi-norm on~$F$ that carries all other finite functorial
    semi-norms on~$F$.
\end{defi}
\begin{remark}
    Definition~\ref{def:uffsn} is not interesting for the
    non-finite case, because the functorial semi-norm that is~$\infty$
    everywhere, except at~$0$, is always universal.
\end{remark}

\begin{example}[\lone-semi-norm]
    Let $d\in\N$. For a topological space~$X$, we set
    \[ \Bigl| \sum_{j=1}^N a_j * \sigma_j \Bigr|_1
        := \sum_{j=1}^N |a_j|
    \]
    for all reduced singular chains~$\sum_{j=1}^N a_j * \sigma_j$
    in~$C_d(X;\R)$. The norm~$|\cdot|_1$ on~$C_d(X;\R)$ induces a finite
    semi-norm~$\|\cdot\|_1$ on singular homology~$H_d(X;\R)$ via
    \[ \|\alpha\|_1 :=
        \inf\bigl\{ |c|_1 \bigm| c\in C_d(X;\R) \text{ is a cycle
        representing } \alpha \bigr\}
    , \]
    which is easily seen to be functorial in the sense of
    Definition~\ref{def:ffsn}. Hence, we obtain the
    \emph{$\ell^1$-semi-norm~$\|\cdot\|_1$} on~$H_d(\;\cdot\;;\R)$.
    
    An invariant defined in terms of the $\ell^1$-semi-norm is the
    \emph{simplicial volume}, introduced by Gromov~\cite{gromov82}:
    For an oriented closed connected $d$-dimensional manifold~$M$, the
    \emph{simplicial volume}~$\|M\|$ of~$M$ is the
    $\ell^1$-semi-norm~$\|M\| := \| [M]_\R \|_1$ of the (real)
    fundamental class~$[M]_\R \in H_d(M;\R)$ of~$M$.

    The $\ell^1$-semi-norm on path-connected spaces also admits other
    geometric descriptions: It is equivalent (in the sense of
    semi-norms) to the volume entropy semi-norm~\cite{babenkosabourau}
    and to the semi-norm generated by URC-manifolds
    (Example~\ref{exa:URC}).
\end{example}

\begin{example}[non-universality of the \lone-semi-norm]\label{exa:nonuniversal}
  For each~$d \in \{3\}\cup\N_{\geq 5}$ there exists a finite functorial
  semi-norm on~$H_d(\;\cdot\;;\R)$ that is \emph{not} carried by the
  $\ell^1$-semi-norm~\cite[Theorem~1.2]{fauserloeh19}. The case~$d = 4$ is
  still wide open at this point.
  On the other hand, all finite functorial semi-norms that are multiplicative
  under finite coverings \emph{are} carried by the
  $\ell^1$-semi-norm~\cite[Proposition~7.11]{crowleyloeh15}.
\end{example}

\begin{example}[singular homology in degrees~$0$ and~$1$]\label{exa:deg01}
  A direct computation shows that for every topological space~$X$ and
  every~$\alpha \in H_0(X;\R)$ with~$\alpha \neq 0$, we
  have~$\|\alpha\|_1 \neq 0$. In particular, $\|\cdot\|_1$
  is a universal finite functorial semi-norm on~$H_0(\args;\R)$. 

  In contrast, every finite functorial semi-norm~$|\cdot|$
  on~$H_1(\args;\R)$ vanishes: We first consider the circle. Because
  $S^1$ admits a self-map~$f \colon S^1 \to S^1$ of degree~$2$,
  functoriality gives the estimate~$2 \cdot \bigl|[S^1]_\R \bigr| =
  \bigl| H_1(f;\R)[S^1]_\R\bigr| \leq \bigl|[S^1]_\R\bigr|$ and thus
  \[ \bigl| [S^1]_\R\bigr| = 0.
  \]
  For the general case, we observe that the Hurewicz theorem and the
  universal coefficient theorem show that for every topological
  space~$X$ and every~$\alpha \in H_1(X;\R)$, there exists~$N \in \N$, 
  continuous maps~$f_1, \dots, f_N \in \map(S^1,X)$, and
  $b_1, \dots, b_N \in \R$ with
  \[ \alpha = \sum_{j=1}^N b_j * H_1(f_j;\R)([S^1]_\R).
  \]
  Therefore, functoriality and the triangle inequality lead to
  $|\alpha| \leq \sum_{j=1}^N |b_j| * \bigl|[S^1]_\R\bigr|
  = 0,
  $ 
  as claimed. In particular, the $\ell^1$-semi-norm is also
  universal on~$H_1(\args;\R)$. The principle of representing
  homology classes by special classes will be discussed in more
  detail in Section~\ref{subsec:genfsn}.
\end{example}

\begin{example}[representable and countably additive functors]
    In the situation of Setup~\ref{setup:F}, if $K \in \{\Q,\R\}$
    and if the functor~$F$ is representable or countably additive,
    then the trivial functorial semi-norm on~$F$ is
    universal~\cite[Corollaries 4.1 and~4.5]{loeh16}.
\end{example}

\subsection{Generating functorial semi-norms}\label{subsec:genfsn}

Functorial semi-norms on singular homology lead to estimates for
mapping degrees; conversely, properties of mapping degrees can be used
to generate functorial semi-norms on singular
homology~\cite[Section~4]{crowleyloeh15}. This way of ``generating
functorial semi-norms via special spaces'' generalises as follows:

\begin{defi}[generated semi-norm]\label{def:generatedsn}
    In the situation of Setup~\ref{setup:F},
    let $S$~be a class of $F$-elements and
    let $v\colon S\to\Rnn\cup\{\infty\}$.
    \begin{itemize}
    \item An \emph{$S$-representation} of an $F$-element $(X,\alpha)$
      is a representation of the form
      \[ \alpha = \sum_{j=1}^N b_j * F(f_j)(\alpha_j)
      \]
      with~$N \in \N$,
      coefficients $b_1, \dots, b_N \in K$,
      $F$-elements $(X_1, \alpha_1), \dots, (X_N, \alpha_N) \in S$, and
      morphisms~$f_1 \colon X_1 \to X, \dots, f_N \colon X_N \to X$ in~$C$.
    \item The \emph{semi-norm~$|\cdot|_v$ on~$F$ generated by~$v$}
      is defined by: For all $F$-elements $\alpha$, we set
      \[ |\alpha|_v
      := \inf\Bigl\{ \sum_{j=1}^N |b_j| * v(\alpha_j)
      \Bigm| \text{$\sum_{j=1}^N b_j * F(f_j)(\alpha_j)$ is an $S$-representation of~$\alpha$}
      \Bigr\},
      \]
      with $\inf\emptyset := \infty$.
    \end{itemize}
\end{defi}

\begin{prop}[generating functorial semi-norms via functions]\label{prop:gensn}
    In the situation of Definition~\ref{def:generatedsn}, we have:
    \begin{enumerate}
        \item
            The semi-norm~$|\cdot|_v$ generated by~$v$ is a functorial
            semi-norm on~$F$.
            
        \item\label{prop:gensn:upperbound}
            For all $F$-elements~$\alpha$ in~$S$, we have
            $|\alpha|_v \leq v(\alpha)$.
            
        \item\label{prop:gensn:monotonic}
            If $v'\colon S\to\Rnn\cup\{\infty\}$ is a function with
            $v' \geq v$ (pointwise), then $|\alpha|_{v'} \geq
            |\alpha|_v$ for all $F$-elements~$\alpha$. 
            In particular, $|\cdot|_{v'}$ carries $|\cdot|_v$.
            
        \item\label{prop:gensn:finite}
            If $S$ contains all $F$-elements given by a skeleton
            of~$C$ and $v$ does not attain~$\infty$, then
            $|\cdot|_v$ is finite.
            
        \item\label{prop:gensn:roundtrip}
            Let $\sigma$ be a functorial semi-norm on~$F$ and
            let $v \geq |\cdot|_\sigma$ on~$S$.
            Then, for all $F$-el\-e\-ments~$\alpha$, we have
            $|\alpha|_v \geq |\alpha|_\sigma$.
    \end{enumerate}
\end{prop}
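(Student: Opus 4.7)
The plan is that all five assertions follow from direct manipulation of $S$-representations and passage to infima, with part~(1) carrying most of the verification. I would first check the semi-norm axioms for~$|\cdot|_v$ on each~$F(X)$: the empty $S$-representation (with~$N = 0$) witnesses~$|0|_v = 0$; homogeneity follows by rescaling the coefficients of any $S$-representation by a nonzero~$a \in K$, together with the symmetric argument applied to~$a^{-1}$; and the triangle inequality follows from concatenating representations of $\alpha$ and $\beta$ to produce one of~$\alpha + \beta$. Functoriality of~$|\cdot|_v$ then reduces to the observation that for any morphism~$f \colon X \to Y$, post-composing each term in an $S$-representation $\alpha = \sum_j b_j \cdot F(f_j)(\alpha_j)$ with~$f$ yields an $S$-representation of~$F(f)(\alpha)$ with the same $v$-cost, so taking the infimum gives~$|F(f)(\alpha)|_v \leq |\alpha|_v$.

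Parts~(2), (3), and~(5) are then short. For~(2), the single-term representation $\alpha = 1 \cdot F(\id_X)(\alpha)$ for~$\alpha \in S$ directly yields $|\alpha|_v \leq v(\alpha)$. For~(3), each $S$-representation has $v'$-cost at least its $v$-cost, so passing to infima gives $|\alpha|_{v'} \geq |\alpha|_v$, and the carrying statement is then a tautology. For~(5), using the semi-norm axioms for~$\sigma$, its functoriality, and the hypothesis~$v \geq |\cdot|_\sigma$ on~$S$, any $S$-representation satisfies $|\alpha|_\sigma \leq \sum_j |b_j| \cdot |F(f_j)(\alpha_j)|_\sigma \leq \sum_j |b_j| \cdot |\alpha_j|_\sigma \leq \sum_j |b_j| \cdot v(\alpha_j)$; taking infima gives~$|\alpha|_\sigma \leq |\alpha|_v$.

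Part~(4) is the only statement requiring any structural input. Given a skeleton~$D$ of~$C$ all of whose $F$-elements lie in~$S$, for an arbitrary~$X \in \Ob(C)$ I would pick an isomorphism~$\varphi \colon X' \to X$ with~$X' \in \Ob(D)$. Then for any~$\alpha \in F(X)$, the length-one $S$-representation $\alpha = 1 \cdot F(\varphi)\bigl(F(\varphi^{-1})(\alpha)\bigr)$ exhibits $|\alpha|_v \leq v\bigl(F(\varphi^{-1})(\alpha)\bigr) < \infty$. I do not anticipate a serious obstacle anywhere in this proposition; the only care needed is with the conventions $N = 0$ for the empty representation and $\inf\emptyset = \infty$, and with confirming that the latter cannot arise under the hypotheses of part~(4).
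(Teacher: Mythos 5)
Your proposal is correct and follows essentially the same route as the paper: the identity-morphism trick for~(2), pointwise comparison of representation costs for~(3), and the chain $|\alpha|_\sigma \leq \sum_j |b_j| \cdot |F(f_j)(\alpha_j)|_\sigma \leq \sum_j |b_j| \cdot |\alpha_j|_\sigma \leq \sum_j |b_j| \cdot v(\alpha_j)$ followed by passage to the infimum for~(5). For~(4) the paper combines~(2) with the observation that a functorial semi-norm is determined by its restriction to a skeleton, while you instead write down the length-one $S$-representation through an isomorphism to a skeletal object explicitly; these are the same argument, and your extra care about the $N=0$ and $\inf\emptyset=\infty$ conventions is appropriate rather than superfluous.
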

\begin{proof}
    Using functoriality of~$F$, it is easy to see that $|\cdot|_v$ is
    a functorial semi-norm.
    Also~\pref{prop:gensn:monotonic} follows immediately from the
    definition.
    For an $F$-element $(X,\alpha)$, the identity morphism~$X\to X$
    shows~\pref{prop:gensn:upperbound}.
    Property~\pref{prop:gensn:finite} is a direct consequence
    of~\pref{prop:gensn:upperbound} and the fact that a
    \emph{functorial} semi-norm is uniquely determined by its
    restriction to a skeleton.
    We now prove~\pref{prop:gensn:roundtrip}:
    Let $\sum_{j=1}^N b_j * F(f_j)(\alpha_j) = \alpha$
    be an $S$-representation of~$\alpha$. Then
    \[ |\alpha|_\sigma \leq \sum_{j=1}^N |b_j| * \bigl|F(f_j)(\alpha_j)\bigr|_\sigma
       \leq \sum_{j=1}^N |b_j| * |\alpha_j|_\sigma
       \leq \sum_{j=1}^N |b_j| * v(\alpha_j)
    \]
    by the triangle inequality, functoriality of~$\sigma$, and the
    assumption on~$v$. Taking the infimum over all $S$-representations
    of~$\alpha$, we obtain $|\alpha|_v \geq |\alpha|_\sigma$.
\end{proof}

\begin{remark}[finiteness of generated semi-norms]
    Proposition~\subref{prop:gensn}{:finite}
    only provides a sufficient criterion for~$|\cdot|_v$ to be finite.
    For example, let $d \in \N$ and let us consider the case
    $F = H_d(\;\cdot\;;\R)\colon\!\Top\to\Vect_\R$. Then,
    $|\cdot|_v$ is finite whenever $S$ contains
    and $v$ is finite on enough fundamental classes of manifolds,
    because rational homology classes can (up to
    multiplicity) be realised as the push-forward of fundamental
    classes by a classical result by Thom~\cite{thom54}%
    \cite[Corollary~3.2]{crowleyloeh15}.
    Notably, it is already enough to take the finite
    coverings of a single URC-manifold in dimension~$d$
    (Example~\ref{exa:URC}).
\end{remark}

\begin{example}[semi-norms generated by URC-manifolds]\label{exa:URC}
  Let $d\in \N$. An oriented closed connected $d$-manifold~$M$ is a
  \emph{URC-manifold} (universal realisation of
  cycles)~\cite[p.~1747]{gaifullin13} if for every topological
  space~$X$ and every~$\alpha \in H_d(X;\Z)$, there exists a
  finite-sheeted covering~$\overline M$ of~$M$, a map~$f \in
  \map(\overline M,X)$, and~$b \in \Z \setminus \{0\}$ with
  \[ H_d(f;\Z)\bigl([\overline M]_\Z\bigr) = b * \alpha.
  \]
  For example, the point is a URC-manifold in dimension~$0$, the
  circle is a URC-manifold in dimension~$1$, and oriented closed
  connected surfaces of genus at least~$2$ are URC-manifolds in
  dimension~$2$. Gaifullin showed that (aspherical) URC-manifolds
  exist in every dimension~\cite[Theorem~1.3]{gaifullin13}.

  If $M$ is a URC-manifold in dimension~$d$ and $S$ is the class of
  fundamental classes of all connected finite-sheeted covering
  manifolds of~$M$, then every homology class in~$H_d(\args;\R)$
  admits an $S$-representation. Thus, each function~$v \colon S \to
  \R_{\geq 0}$ generates a finite functorial semi-norm
  on~$H_d(\args;\R)$~\cite[Example~7.10]{crowleyloeh15}.
  
  If $v$ is given by the covering degree, then $|\cdot|_v$ is
  equivalent to the $\ell^1$-semi-norm
  on~$H_d(\args;\R)$~\cite[Theorem~6.1]{gaifullin_comb}.
\end{example}

\section{Universality under equivalence of categories}\label{sec:equiv}

Universal finite functorial semi-norms are compatible with equivalences of
categories (Corollary~\ref{cor:equivalenceofcats}). Indeed, a stronger
result holds: In Proposition~\ref{prop:natretraction}, we show that
universal functorial semi-norms can be transferred along
``weak retractions'' of categories.

\begin{setup} \label{setup:equiv}
    Let $C$ and~$D$ be categories, let $K$~be a normed field and let
    $F\colon C\to \Vect_K$ and $G\colon D\to\Vect_K$ be functors.
    Let $A\colon C\to D$ be a functor such that
    $G\after A$ is naturally isomorphic to~$F$.
    \[
    \begin{tikzcd}[column sep=tiny]
        C \ar[rr, "A", yshift=2pt] \ar[dr, "F"'] & & D \ar[dl, "G"]
        \ar[ll, "B", gray, yshift=-2pt]
        \\
        & \Vect_K &
    \end{tikzcd}
    \]
\end{setup}

\begin{prop} \label{prop:natretraction}
    In the situation of Setup~\ref{setup:equiv}, let $B\colon D\to C$
    be a right-inverse of~$A$, i.e., we assume that $A\after B$ is
    naturally isomorphic to the identity on~$D$.
    Then, if $F$ admits a universal functorial semi-norm, so does~$G$.
\end{prop}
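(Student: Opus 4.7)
The plan is to transfer the universal semi-norm on $F$ over to $G$ by using $B$ to pull back, and then verify universality by using $A$ to pull semi-norms in the other direction. Fix natural isomorphisms $\eta \colon F \natTo G \after A$ and $\epsilon \colon \id_D \natTo A \after B$.

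First, I would observe a general pullback principle: if $H \colon D' \to D$ is any functor and $\tau$ is a functorial semi-norm on $G$, then $H^*\tau$ defined by $|\gamma|_{H^*\tau} := |\eta_{\ldots}(\gamma)|_\tau$ (composed as appropriate through the natural iso) is a functorial semi-norm on $G \after H$. Applied to $H = B$ we can transport back to a functor equivalent to $F$; applied to $H = A$ and using $\eta$ we can transport a semi-norm on $G$ to one on $F$. I would also note the key soft fact that, if $\varphi \colon Y \to Y'$ is an isomorphism in $D$, then for any functorial semi-norm $\tau$ on $G$ the map $G(\varphi) \colon G(Y) \to G(Y')$ is norm-preserving; this follows by applying the semi-norm inequality to both $\varphi$ and $\varphi\inv$.

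With this in hand, given a universal functorial semi-norm $\sigma$ on~$F$, I would define the candidate semi-norm~$\tau$ on $G$ by
\[
  |\beta|_\tau := \bigl| \eta_{B(Y)}\inv\bigl(G(\epsilon_Y)(\beta)\bigr) \bigr|_\sigma
  \qquad (\beta \in G(Y), \ Y \in \Ob(D)).
\]
Functoriality in~$Y$ is a direct diagram chase using naturality of~$\epsilon$, naturality of~$\eta$, and functoriality of~$\sigma$; finiteness of~$\tau$ follows at once from finiteness of~$\sigma$ should that be required.

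For universality, let $\tau'$ be any (finite) functorial semi-norm on $G$. I would pull $\tau'$ back along $A$ using~$\eta$ to obtain a functorial semi-norm~$\sigma'$ on~$F$ defined by $|\alpha|_{\sigma'} := |\eta_X(\alpha)|_{\tau'}$. By universality of~$\sigma$ on~$F$, $\sigma$ carries $\sigma'$. Now suppose $|\beta|_\tau = 0$; then by definition $|\eta_{B(Y)}\inv(G(\epsilon_Y)(\beta))|_\sigma = 0$, hence $|\eta_{B(Y)}\inv(G(\epsilon_Y)(\beta))|_{\sigma'} = 0$, which unravels via the definition of~$\sigma'$ to $|G(\epsilon_Y)(\beta)|_{\tau'} = 0$. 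Since $\epsilon_Y$ is an isomorphism in~$D$, the soft fact above gives $|\beta|_{\tau'} = |G(\epsilon_Y)(\beta)|_{\tau'} = 0$, so $\tau$ carries $\tau'$.

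The argument is essentially a bookkeeping exercise with two natural isomorphisms, and the main obstacle is just tracking those identifications cleanly; the one genuine idea is that universality is preserved by pulling back a semi-norm along any functor and that isomorphisms in the source are automatically norm-preserving under any functorial semi-norm, which is what allows us to discard the~$\epsilon_Y$ factor at the end.
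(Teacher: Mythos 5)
Your proof is correct and follows essentially the same route as the paper: your candidate $|\beta|_\tau = \bigl|\eta_{B(Y)}^{-1}(G(\epsilon_Y)(\beta))\bigr|_\sigma$ is exactly the paper's $\varphi^\ast(\sigma \after B)$ with $\varphi = \psi^{-1}\circ G(\lambda)$, and universality is likewise obtained by pulling an arbitrary $\tau'$ back to $F$ along $A$, invoking universality of $\sigma$, and using that $G$ applied to an isomorphism of $D$ is a $\tau'$-isometry. Your version is marginally more economical in that you pull $\tau'$ back directly via $\eta$ instead of first twisting it by the natural isomorphism $\Id_D \natTo A\after B$ as the paper does, but this is only a bookkeeping difference, not a different argument.
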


As an immediate consequence, we obtain:

\begin{cor} \label{cor:equivalenceofcats}
    In the situation of Setup~\ref{setup:equiv},
    assume that $A\colon C\to D$ is an equivalence of categories.
    Then $F$~admits a universal finite functorial semi-norm if and
    only if $G$~does.
\end{cor}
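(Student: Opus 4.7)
The plan is to derive the corollary directly from Proposition \ref{prop:natretraction} by applying it in both directions, exploiting the fact that an equivalence of categories comes with a quasi-inverse that is a right-inverse up to natural isomorphism.

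First I would fix a quasi-inverse $B \colon D \to C$ of the equivalence~$A$, i.e., a functor together with natural isomorphisms $A \after B \natTo \Id_D$ and $B \after A \natTo \Id_C$. For the ``only if'' direction, this $B$ is a right-inverse of~$A$ in the precise sense required in Proposition~\ref{prop:natretraction}, so plugging everything into Setup~\ref{setup:equiv} immediately yields that $G$ admits a universal finite functorial semi-norm whenever $F$ does.

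For the ``if'' direction, I would swap the roles of the two categories: set $C' := D$, $D' := C$, $F' := G$, $G' := F$, $A' := B$, $B' := A$. The hypothesis of Setup~\ref{setup:equiv} now asks for a natural isomorphism $F \after B \natTo G$, which I would obtain by precomposing the given isomorphism $G \after A \natTo F$ with~$B$ and then using $A \after B \natTo \Id_D$ to simplify the left-hand side. The hypothesis of Proposition~\ref{prop:natretraction} becomes $B \after A \natTo \Id_C$, which holds by choice of~$B$. Applying Proposition~\ref{prop:natretraction} in this swapped situation then produces a universal finite functorial semi-norm on~$F$ from one on~$G$.

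I do not expect any genuine obstacle here: the content is entirely in Proposition~\ref{prop:natretraction}, and the corollary is a symmetry argument. The only thing to be mildly careful about is that the transport of functorial semi-norms along natural isomorphisms (which is implicit in the setup) preserves finiteness, so that the adjective ``finite'' is maintained in both directions; this is clear because a natural isomorphism $F(X) \isom G \after A (X)$ is in particular a linear bijection, so the induced semi-norm attains~$\infty$ exactly where the original does.
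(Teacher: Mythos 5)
Your proposal is correct and matches the paper's intent: the corollary is stated there as an immediate consequence of Proposition~\ref{prop:natretraction}, obtained exactly by applying it once with a quasi-inverse~$B$ and once with the roles of $C$ and $D$ (and of $A$ and $B$) swapped, checking that the required natural isomorphisms $F \after B \natTo G$ and $B\after A \natTo \Id_C$ are available. Your remark that pulling back along a natural isomorphism preserves finiteness is also the right point to verify, and it holds for the reason you give.
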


Before we give the proof of Proposition~\ref{prop:natretraction}, we
make a few remarks about the interplay between functorial semi-norms
and natural isomorphisms:

\begin{remark}[non-strict functorial semi-norms]\label{rem:nonstrictfsn}
    In the situation of Setup~\ref{setup:equiv} and
    given a functorial semi-norm~$\tau$ on~$G$, one would like to
    precompose $\tau$ with~$A$ to get a functorial semi-norm on~$F$.
    However, as $G\after A$ is not necessarily \emph{equal} to~$F$,
    also $\tau\after A$ will not necessarily be a strict lift of~$F$,
    but only up to natural isomorphism. In other words: if
    $U\colon\!\snVect_K\to\Vect_K$ denotes the forgetful functor,
    the right triangle in the diagram
    \[
    \begin{tikzcd}[column sep=tiny]
        C \ar[rr, "A", yshift=2pt] \ar[dr, "F"']
            \ar[ddr, dashed, "\tau\after A"', bend right]
        & &
        D \ar[dl, "G"] \ar[ddl, "\tau", bend left]
        \\
        & \Vect_K &
        \\
        & \snVect_K \ar[u, "U"] &
    \end{tikzcd}
    \]
    commutes on the nose while the other two only commute up to
    natural isomorphism.
    
    One possible way to proceed would be to relax the definition of
    functorial semi-norm: Instead of $U\after\tau = G$ we only
    require~$U\after\tau \isom G$, and then the functorial
    semi-norm consists of $\tau$~together with such a natural
    isomorphism. 
  
    This sounds like the correct setting to pursue the categorical view
    on functorial semi-norms (or formalisation in a proof
    assistant~\cite[Chapter~4.1.2]{loeh_explore}).
    On the other hand, this setting does not actually increase the
    pool of functorial semi-norms:
    Indeed, if $\eta \colon G \natTo U \after \tau$ is a natural
    isomorphism, the technique from Remark~\ref{rem:pullback} will show
    how to construct a (strict) functorial semi-norm on~$G$ ``with the
    same semi-norms''.
\end{remark}

\begin{remark}[pull-back along natural transformation]
    \label{rem:pullback}%
    Let $C$ be a category, let $K$ be a normed field,
    let $\eta\colon F\natTo F'$ be a natural transformation of
    functors~$C\to\Vect_K$, and
    let $\sigma$~be a functorial semi-norm on~$F'$.
    Then, by naturality of~$\eta$,
    \[ C\to\snVect_K, \quad
    \begin{cases}
        X \mapsto
            \bigl( F(X), (\eta_X)^\ast|\cdot|_{\sigma} \bigr)
        & \text{on objects}
        \\
        \hphantom{X}\mathllap{f} \mapsto F(f)
        & \text{on morphisms}
    \end{cases}
    \]
    defines a functorial semi-norm~$\eta^\ast \sigma$ on~$F$.
\end{remark}

\phantomsection%
\belowpdfbookmark{Proof of Proposition~\ref{prop:natretraction}}%
                 {proof of prop:natretraction}
\begin{proof}[Proof of Proposition~\ref{prop:natretraction}]
    First, we fix some notation:
    Let $\sigma$~be a universal finite functorial semi-norm on~$F$.
    Let $\lambda \colon\! \Id_D \natTo A \after B$
    and $\psi \colon F \natTo G \after A$ be natural
    isomorphisms. Then~$\varphi := \psi^{-1} \circ G(\lambda)$ is a
    natural isomorphism~$G \natTo F \after B$.  We consider the
    induced functorial semi-norm~$\tilde\sigma := \varphi^\ast (\sigma
    \after B)$ on~$G$ (Remark~\ref{rem:pullback}).
        
    We show that $\tilde\sigma$ is universal for~$G$: Let $\tau$ be a
    finite functorial semi-norm on~$G$.  The idea is straightforward:
    We go to the side of~$F$, compare the result with the
    universal~$\sigma$ on~$F$, and then derive universality
    of~$\tilde\sigma$ on~$G$. However, this involves a round-trip from
    $D$ over $C$ back to~$D$, and thus we have to take~$\lambda$ into
    account. More precisely, we proceed as follows:
    \begin{enumerate}
        \item
            Let $(Y,\tilde\beta)$ be a $G$-element
            with $|\tilde\beta|_{\tilde\sigma} = 0$. We need to show
            that we also have $|\tilde\beta|_\tau = 0$.
            
        \item
            In order to prepare for the round-trip,
            we twist~$\tau$ by~$\lambda$ and obtain
            the finite functorial semi-norm~$\tau_\lambda :=
            G(\lambda)^\ast(\tau \after A \after B)$ on~$G$.
            
        \item
            Using~$A$ and~$\psi$, we can pull this back to the finite functorial
            semi-norm~$\tilde\tau := \psi^\ast (\tau_\lambda \after A)$
            on~$F$, which we can now relate to~$\sigma$.

            Let $\beta := \varphi_Y(\tilde\beta) \in F(B(Y))$
            be the element corresponding to~$\tilde\beta$.
            By construction, we have
            \[ |\beta|_\sigma
            = \bigl| \varphi_Y(\tilde\beta)\bigr|_\sigma
            = \bigl| \varphi_Y(\tilde\beta)\bigr|_{\sigma\after B}
            = |\tilde\beta|_{\varphi^\ast(\sigma \after B)}
            = |\tilde\beta|_{\tilde\sigma}
            = 0;
            \]
            in the second step, we reinterpreted $\varphi_Y(\tilde\beta)$
            as element of~$F \circ B(Y)$, so that instead of~$\sigma$
            on~$B(Y)$ we can equivalently apply~$\sigma \after B$
            on~$Y$.
            
            From universality of~$\sigma$, we hence obtain
            $|\beta|_{\tilde\tau} = 0$.
            
         \item
            In the last step, we translate this result back to~$\tau$.
            To keep the notation light, we will not explicitly annotate
            the objects to which the natural transformations are applied. 
            We compute
            \begin{align*}
              0
              &
              = |\beta|_{\tilde \tau} 
              = |\beta|_{\psi^\ast(\tau_\lambda\after A)} 
              = \bigl|\psi(\beta)\bigr|_{\tau_\lambda \circ A} 
              = \bigl|\psi(\beta)\bigr|_{\tau_\lambda} 
              = \bigl|\psi(\beta)\bigr|_{G(\lambda)^\ast(\tau\after A\after B)} 
              \\
              &
              = \bigl| G(\lambda) (\psi(\beta))
                \bigr|_{\tau \after A \after B} 
              = \bigl| G(\lambda) (\psi(\beta))
                \bigr|_{\tau}. 
            \end{align*}
            For every object~$Z$ of~$D$,
            the map~$G(\lambda_Z)$ is an isometry with respect to~$|\cdot|_\tau$
            because $\lambda_Z$ is an isomorphism in~$D$ and $\tau$ is a
            functorial semi-norm on~$G$. Therefore, we can continue with 
            \begin{align*}
                \bigl| G(\lambda) (\psi(\beta))
                \bigr|_{\tau} 
              &
              = \bigl| \psi(\beta)
                \bigr|_\tau 
              = \bigl| \psi (\varphi (\tilde\beta))
                \bigr|_\tau 
              = \bigl| \psi \after \psi^{-1} \after G(\lambda) (\tilde \beta)
                \bigr|_\tau 
              = \bigl| G(\lambda)(\tilde \beta)\bigr|_\tau 
              \\
              &
              = |\tilde\beta|_\tau.
            \end{align*}
            We conclude that $|\tilde\beta|_\tau =0$, as claimed.
            \qedhere
    \end{enumerate}
\end{proof}

\section{Vanishing loci}\label{sec:vanishingloci}

In this section, we reformulate the ``carries'' relation
(Definition~\ref{def:carries}) in terms of vanishing loci
(Definition~\ref{def:locus}, Remark~\ref{rem:Ncarry}).

The vanishing loci provide a convenient language to reason about
families of functorial semi-norms and their relations: In
Section~\ref{subsec:carryseq}, we use a diagonalisation construction
on the associated functions to construct a functorial semi-norm that
carries countably many given functorial semi-norms
(Proposition~\ref{prop:carrySeqGenerated} and
Corollary~\ref{cor:carrySeqSn}).

\begin{setup} \label{setup:locus}%
    Let $C$ be a category, let $K$~be a normed field, let
    $F\colon C\to\Vect_K$ be a functor, and let $S$~be a class
    of $F$-elements.
\end{setup}

\subsection{A reformulation of carrying}

\begin{defi}[vanishing locus] \label{def:locus}
  We assume Setup~\ref{setup:locus}; let $X \in \Ob(C)$.
  \begin{itemize}
  \item For a functorial semi-norm~$\sigma$ on~$F$, we define
    the \emph{vanishing locus} of~$\sigma$ on~$X$ by 
    \[ N_\sigma(X)
     := \bigl\{ \alpha \in F(X) \bigm| |\alpha|_\sigma = 0 \bigr\}.
    \]
  \item If $C$~is small, we write $\Fsn(F)$ for the class of all
    finite functorial semi-norms on~$F$ and set
      \[ N(X) := \bigcap_{\sigma \in \Fsn(F)} N_\sigma(X).
      \]
  \item For a function $v\colon S \to \Rnn$,
        we write $N_v(X)$ for $N_{|\cdot|_v}(X)$,
        where $|\cdot|_v$ is the functorial semi-norm generated
        by~$v$ (Proposition~\ref{prop:gensn}).
  \end{itemize}
\end{defi}

In the situation of the definition, $N_\sigma(X)$ and $N(X)$
are $K$-subspaces of~$F(X)$ and $N(X) \subset N_\sigma(X)$.
Furthermore, if we regard $\Fsn(F)$ as the preorder category with
respect to the ``carries'' relation, an initial object of this
category is precisely a universal finite functorial semi-norm on~$F$,
while the trivial functorial semi-norm is always a terminal one.

\begin{remark}\label{rem:Ncarry}
    In the situation of Setup~\ref{setup:locus}, let
    $\sigma$,~$\tau$ be functorial semi-norms on~$F$.
    \begin{enumerate}
        \item
            Then $\sigma$ carries~$\tau$ if and only if
            \[ \fa{X \in \Ob(C)} N_\sigma(X) \subset N_\tau(X) .
            \]
        \item\label{rem:Ncarry:universal}
            If $C$ is small, $\sigma$~is universal on~$F$ if and only
            if it is finite and fulfills
            \[ \fa{X \in \Ob(C)} N_\sigma(X) \subset N(X).
            \]
        \item\label{rem:Ncarry:function}
            By Proposition~\subref{prop:gensn}{:roundtrip},
            the functorial semi-norm generated by
            $S\to\Rnn,\, \alpha\mapsto |\alpha|_\sigma$ carries~$\sigma$, i.e.,
            \[ \fa{X \in \Ob(C)}
                N_{\alpha\mapsto |\alpha|_\sigma}(X) \subset N_\sigma(X) .
            \]
    \end{enumerate}
\end{remark}

\subsection{Carrying a sequence of semi-norms}
\label{subsec:carryseq}

The main ingredient for the proof of Theorem~\ref{thm:genuffsn}
is that we can simultaneously carry countably many finite functorial
semi-norms generated on a countable class of elements:

\begin{prop}
  \label{prop:carrySeqGenerated}
  In the situation of Setup~\ref{setup:locus},
  let $S$ be countable and
  let $(v_n)_{n \in \N}$ be a sequence of
  functions~$S \to \R_{\geq 0}$. 
  Then there exists a function~$v \colon S \to \R_{\geq 0}$
  such that $|\cdot|_v$ carries all~$(|\cdot|_{v_n})_{n\in \N}$,
  i.e., with
  \[ \fa{X \in \Ob(C)}
     N_v(X) \subset \bigcap_{n \in \N} N_{v_n}(X).
  \]
  In particular: If $C$ is small, if every $F$-element admits an
  $S$-representation, and if
  \[ \fa{X \in \Ob(C)}
     \bigcap_{n \in \N} N_{v_n}(X) \subset N(X), 
  \]
  then $|\cdot|_v$ is universal for~$F$.
\end{prop}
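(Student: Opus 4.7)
The plan is to build the desired $v$ as a weighted series $v := \sum_{n \in \N} c_n v_n$ with strictly positive weights $c_n$, chosen by diagonalisation so that the series converges pointwise on~$S$. Once such a $v$ is in hand, for each fixed~$n$ the pointwise inequality $v \geq c_n v_n$ on~$S$ and Proposition~\subref{prop:gensn}{:monotonic} give that $|\cdot|_v$ carries $|\cdot|_{c_n v_n}$. Since $c_n > 0$, the semi-norm~$|\cdot|_{c_n v_n}$ equals $c_n \cdot |\cdot|_{v_n}$ (directly from Definition~\ref{def:generatedsn}), so it has the same vanishing locus as~$|\cdot|_{v_n}$, and we obtain $N_v(X) \subset N_{v_n}(X)$ for every $X \in \Ob(C)$ and every $n \in \N$.

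First, I would fix an enumeration $S = \{\alpha_1, \alpha_2, \ldots\}$ (treating a finite~$S$ as an eventually repeating sequence) and set
\[
    c_n := \frac{2^{-n}}{1 + \max\bigl(v_n(\alpha_1), \ldots, v_n(\alpha_n)\bigr)} \in \R_{>0},
\]
which is well-defined because only finitely many finite non-negative values appear in the denominator. For each fixed $k \in \N$, the series $\sum_n c_n v_n(\alpha_k)$ splits into the finite head $\sum_{n < k} c_n v_n(\alpha_k)$ and a tail $\sum_{n \geq k} c_n v_n(\alpha_k)$ whose summands are each bounded by $2^{-n}$ (by construction of~$c_n$, using $k \leq n$). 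Hence $v(\alpha_k) := \sum_n c_n v_n(\alpha_k) \in \R_{\geq 0}$, which yields the function claimed in the first half.

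For the ``in particular'' statement, I would combine the main inclusion with the given hypothesis to conclude $N_v(X) \subset \bigcap_n N_{v_n}(X) \subset N(X)$ for every~$X$, and then invoke Remark~\subref{rem:Ncarry}{:universal}. The only remaining point is finiteness of~$|\cdot|_v$: given any $F$-element~$\alpha$ with an $S$-representation $\alpha = \sum_j b_j F(f_j)(\alpha_j)$ (which exists by assumption), the definition of the generated semi-norm directly gives $|\alpha|_v \leq \sum_j |b_j| \cdot v(\alpha_j) < \infty$, so $|\cdot|_v$ is a finite functorial semi-norm, hence universal.

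The only delicate step is the choice of the weights~$c_n$: they must remain strictly positive (otherwise the inequality $v \geq c_n v_n$ carries no information about the vanishing locus of~$|\cdot|_{v_n}$), while simultaneously forcing the series $\sum_n c_n v_n(\alpha_k)$ to be finite for every~$k$. The diagonal truncation placed in the denominator of~$c_n$ is designed precisely to balance these two demands, ensuring that $v_n$ is controlled on the first $n$~enumerated elements of~$S$ so that only a geometric tail remains to sum.
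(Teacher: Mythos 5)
Your argument is correct, and it takes a genuinely different route from the paper's. The paper also diagonalises over an enumeration of~$S$, but it defines $v(\alpha_n)$ as the \emph{maximum} $\max\{v_j(\alpha_n)\mid j\in\{-1,\dots,n\}\}$; since this $v$ dominates a fixed $v_m$ only on the tail $\{\alpha_n \mid n\geq m\}$ of the enumeration, the paper must then compensate for the finitely many exceptional elements $\alpha_0,\dots,\alpha_{m-1}$ by introducing explicit constants $q_k$ and proving the quantitative estimate $|\alpha|_v \geq Q\cdot|\alpha|_{v_m}$ with $Q>0$, from which carrying follows. You instead take a weighted \emph{sum} $v=\sum_n c_n v_n$ with the weights $c_n>0$ shrunk (via the diagonal truncation $\max(v_n(\alpha_1),\dots,v_n(\alpha_n))$ in the denominator) just enough to force pointwise convergence; the payoff is that $v\geq c_n v_n$ holds on \emph{all} of~$S$, so carrying is immediate from Proposition~\subref{prop:gensn}{:monotonic} together with the (correct and easy) observation that $|\cdot|_{c_nv_n}=c_n\,|\cdot|_{v_n}$ has the same vanishing locus as $|\cdot|_{v_n}$. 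Your version is somewhat more streamlined; the paper's version avoids infinite series and delivers the slightly stronger multiplicative comparison $|\cdot|_v\geq Q\cdot|\cdot|_{v_m}$ rather than only an inclusion of vanishing loci. Two cosmetic points: your $c_0$ (or $c_n$ for whatever the least index is) involves a maximum over an empty list, which you should set to~$0$ by convention; and your finiteness argument for the ``in particular'' part, while phrased directly, is exactly the content of Proposition~\subref{prop:gensn}{:finite} adapted to the hypothesis that every $F$-element admits an $S$-representation.
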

\begin{proof}
  The second part follows from the first part and the
  characterisation of universality from Remark~\subref{rem:Ncarry}{:universal}.

    We now prove the first part.
    As indicated by Proposition~\subref{prop:gensn}{:monotonic}, we
    would like to set ``$v := \sup_n v_n$'', but of course
    this might not produce a \emph{finite valued} function. So instead, we
    choose 
    \[ v\colon S\to\Rnn, \;
        \alpha_n \mapsto \max\bigl\{ v_j(\alpha_n) \bigm| j\in\{-1,\dots,n\} \bigr\}
        ,
    \]
    where we fix and implicitly use an enumeration
    $(X_n,\alpha_n)_{n\in\N}$ of~$S$ and where $v_{-1} := 1$.
    
    \begingroup%
    \newcommand{\kj}{{k_j}}%
    \newcommand{\vm}{{v_m}}%
    In order to show that $v$~has the claimed property, we let
    $m\in\N$ and show that $|\cdot|_v$ carries~$|\cdot|_{v_m}$: 
    We introduce the following constants: Let $q_{-1} := 1$, let
    \[
        q_k :=
        \begin{cases}
            v(\alpha_k) * |\alpha_k|_\vm\inv &
                \text{if } |\alpha_k|_\vm > 0,
            \\
            1 & \text{if } |\alpha_k|_\vm = 0
        \end{cases}
    \]
    for all $k\in\setZeroTo{m}$, and let
    \[
        Q := \min\bigl\{ q_k \bigm| k\in\{-1,\dots,m\} \bigr\}
    .
    \]
    By construction, we have that~$Q \in (0,1]$. 
    For every $F$-element~$\alpha$ and every
    $S$-rep\-re\-sen\-ta\-tion~$\alpha = \sum_{j=1}^N b_j * F(f_j)(\alpha_\kj)$,
    we can estimate
    \begingroup%
        \newcommand{\sumL}{ \sum_{\substack{j\in\setOneTo{N} \\ k_j < m}}}%
        \newcommand{\sumGE}{\sum_{\substack{j\in\setOneTo{N} \\ k_j \geq m}}}%
    \begin{align*}
        \sum_{j=1}^N |b_j| * v(\alpha_\kj)
        &\geq \sumL |b_j| * q_\kj * |\alpha_\kj|_\vm
        + \sumGE |b_j| * v_m(\alpha_\kj)
        & \text{(definition of~$q_\kj$ and $v$)}
        \\
        &\geq Q * \sumL |b_j| * |\alpha_\kj|_\vm
        + \sumGE |b_j| * |\alpha_\kj|_\vm
        & \text{(def.\ of $Q$ and~P.~\subref{prop:gensn}{:upperbound})}
        \\
        &\geq Q * \sum_{j=1}^N |b_j| * |\alpha_\kj|_\vm
        & \text{(because~$Q \leq 1$)}
        \\
        & \geq Q * |\alpha|_\vm
    , \end{align*}
    \endgroup%
    where the last step follows from applying~$|\cdot|_\vm$ to the given
    $S$-representation of~$\alpha$.
    By taking the infimum over all such $S$-representations, we obtain
    $|\alpha|_v \geq Q * |\alpha|_\vm$. As~$Q > 0$, we see that  
    $|\cdot|_v$ carries $|\cdot|_\vm$ as desired.%
    \endgroup
\end{proof}

\begin{cor} \label{cor:carrySeqSn}
  In the situation of Setup~\ref{setup:locus},
  let $C$ be small,
  let $S$ be countable, and
  let $T\subset\Fsn(F)$ be countable.
  Then there exists a functorial semi-norm~$\sigma$ on~$F$
  such that $\sigma$ carries all of~$T$,
  i.e., with
  \[ \fa{X \in \Ob(C)}
    N_\sigma(X) \subset \bigcap_{\tau \in T} N_\tau(X).
  \]
  In particular: If every $F$-element admits an $S$-representation
  and if
  \[ \fa{X \in \Ob(C)}
     \bigcap_{\tau \in T} N_\tau(X) \subset N(X),
  \]
  then $\sigma$ is universal for~$F$.
\end{cor}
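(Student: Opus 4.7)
The plan is to reduce the statement to Proposition~\ref{prop:carrySeqGenerated} by converting each finite functorial semi-norm in~$T$ into a generating function on~$S$, and then to apply the sequence-carrying result.

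First, I would associate to each~$\tau \in T$ the function
\[ v_\tau \colon S \to \Rnn, \quad \alpha \mapsto |\alpha|_\tau, \]
which is well-defined (and real-valued) because $\tau$ is finite. By Remark~\subref{rem:Ncarry}{:function}, the generated functorial semi-norm~$|\cdot|_{v_\tau}$ carries~$\tau$, so $N_{v_\tau}(X) \subset N_\tau(X)$ for every $X \in \Ob(C)$.

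Next, since $T$ is countable I would fix an enumeration~$(\tau_n)_{n\in\N}$ of~$T$ (padding with a single element if $T$ happens to be finite) and apply the first part of Proposition~\ref{prop:carrySeqGenerated} to the sequence~$(v_{\tau_n})_{n \in \N}$ on the countable class~$S$. This yields a function~$v \colon S \to \Rnn$ such that $N_v(X) \subset \bigcap_{n \in \N} N_{v_{\tau_n}}(X)$ for every $X \in \Ob(C)$. Setting $\sigma := |\cdot|_v$, Step~1 then gives
\[ N_\sigma(X) = N_v(X) \subset \bigcap_{n\in\N} N_{v_{\tau_n}}(X) \subset \bigcap_{\tau\in T} N_\tau(X), \]
and Remark~\ref{rem:Ncarry} turns this into the statement that $\sigma$ carries every $\tau \in T$.

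For the ``in particular'' part, I would first note that finiteness of~$\sigma$ follows from the $S$-representability hypothesis: every $F$-element~$\alpha$ admits some $S$-representation $\sum_{j=1}^N b_j * F(f_j)(\alpha_j)$, whence $|\alpha|_\sigma = |\alpha|_v \leq \sum_{j=1}^N |b_j| * v(\alpha_j) < \infty$. Combining the previous inclusion with the assumption~$\bigcap_{\tau\in T} N_\tau(X) \subset N(X)$, one obtains $N_\sigma(X) \subset N(X)$ for all~$X$, and Remark~\subref{rem:Ncarry}{:universal} concludes that the finite functorial semi-norm~$\sigma$ is universal.

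There is no real obstacle in this argument; the substance has already been absorbed into Proposition~\ref{prop:carrySeqGenerated} and into the $N_\sigma$-reformulation of ``carries''. The only point that requires a moment of care is the passage from the function~$v_\tau$ back to the semi-norm~$\tau$ itself (handled by Remark~\subref{rem:Ncarry}{:function}), which is precisely what allows the ``generated-semi-norm'' version of the diagonalisation to be upgraded to arbitrary countable families of finite functorial semi-norms.
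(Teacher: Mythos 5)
Your proof is correct and follows essentially the same route as the paper: convert each $\tau\in T$ into the generating function $v_\tau=|\cdot|_\tau$ on $S$ via Remark~\ref{rem:Ncarry}, enumerate, and apply Proposition~\ref{prop:carrySeqGenerated}. Your explicit remark on why $\sigma$ is finite under the $S$-representability hypothesis is a detail the paper leaves implicit, but it is the right justification.
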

\begin{proof}
    Again, the second part follows from the first one and
    Remark~\subref{rem:Ncarry}{:universal}.
    
    We prove the first part of the claim:
    By Remark~\subref{rem:Ncarry}{:function},
    for each $\tau \in T$, we find a function
    $v_\tau \colon S\to\Rnn$ with
    \[ \fa{X \in \Ob(C)}
        N_{v_\tau}(X) \subset N_\tau(X)
    . \]
    We then choose an enumeration of~$\{ v_\tau \mid \tau \in T \}$
    and apply Proposition~\ref{prop:carrySeqGenerated}.
\end{proof}

\section{Existence of universal finite functorial semi-norms}\label{sec:exproof}

In this section, we prove Theorem~\ref{thm:genuffsn} and
Corollary~\ref{cor:singhomfin} on singular homology. We first treat
the case of countable fields where a direct enumeration argument
applies (Section~\ref{subsec:uffsn:countable}).  In
Section~\ref{subsec:uffsn:finrange}, we consider functors with range
in finite dimensional vector spaces over general normed fields.

In both cases, we use the following observation:

\begin{remark} \label{rem:objectsCountable}
    By definition, the inclusion functor of a skeleton into the
    ambient category is an equivalence. Invoking
    Corollary~\ref{cor:equivalenceofcats}, we may equivalently
    assume that the category itself has only countably many objects.
\end{remark}

\subsection{The countable case} \label{subsec:uffsn:countable}

\phantomsection%
\belowpdfbookmark{Proof of Theorem~\subref{thm:genuffsn}{:countable}}%
                 {proof of main theorem part one}
\begin{proof}[Proof of Theorem~\subref{thm:genuffsn}{:countable}]
    We may assume that $C$~itself has only countably many objects
    (Remark~\ref{rem:objectsCountable}).
    Furthermore, by assumption, $K$~and $\dim_K F(X)$ are countable
    for all objects~$X$ of~$C$.
    Together, we obtain that the class~$S$ of \emph{all} $F$-elements
    is a countable set. Trivially, all $F$-elements admit an
    $S$-representation.
    
    Let
    $S' := \{ (X,\alpha) \in S \mid \alpha\notin N(X) \}$
    and for each $(X,\alpha) \in S'$ let
    $\sigma_\alpha$ be a finite functorial semi-norm on~$F$
    with $\alpha\notin N_{\sigma_\alpha}(X)$.
    
    By construction, for every object~$Y$ of~$C$, we have
    \[ F(Y) \setminus N(Y) \subset
        \bigcup_{(X,\alpha)\in S'} F(Y) \setminus N_{\sigma_\alpha}(Y).
    \]
    Hence, by De~Morgan's laws and Corollary~\ref{cor:carrySeqSn},
    there exists a universal functorial semi-norm on~$F$.
\end{proof}

\begin{remark}
    In general, it would \emph{not} be enough to have a countable
    set~$S$ with the property that every $F$-element admits an
    $S$-representation. Without the countability assumption
    on~$\Ob(C)$, it might not be possible to control the vanishing
    locus on all objects by only countably many functorial semi-norms,
    and thus, the second part of Corollary~\ref{cor:carrySeqSn} does
    not apply. A concrete example is given in Section~\ref{sec:nonuffsn}.
\end{remark}

\subsection{The case of finite dimensional range}
\label{subsec:proofsinghomfin}\label{subsec:uffsn:finrange}

We prove the second part of Theorem~\ref{thm:genuffsn}
and derive Corollary~\ref{cor:singhomfin}. 
As a preparation, we show that we can achieve
universality on a single object:

\begin{lemma}
  \label{lem:uffsnX}
  Let $C$ be a small category, let $K$ be a normed field,
  and let $F \colon C \to \Vect_K$ be a functor.
  Let $X \in \Ob(C)$ with~$\dim_K F(X) < \infty$.
  Then there exists a finite functorial semi-norm~$\sigma$
  on~$F$ with
  \[ N_\sigma(X) = N(X).
  \]
\end{lemma}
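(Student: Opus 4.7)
The plan is to exploit finite-dimensionality of~$F(X)$ to reduce the a priori class-sized intersection defining~$N(X)$ to a finite one, and then to combine the finitely many witnessing semi-norms by a pointwise sum.

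First I would observe that the family
\[
  \mathcal{S} := \bigl\{ N_\sigma(X) \bigm| \sigma \in \Fsn(F) \bigr\}
\]
consists of $K$-subspaces of the finite-dimensional vector space~$F(X)$, and that it is closed under finite intersection: given finite functorial semi-norms $\sigma_1,\dots,\sigma_n$, the pointwise sum
\[
  |\cdot|_\sigma := \sum_{i=1}^n |\cdot|_{\sigma_i}
\]
defines a finite functorial semi-norm~$\sigma$ on~$F$ (sum of finitely many finite semi-norms; the functoriality estimate follows by summing the corresponding estimates for each~$\sigma_i$), and by construction
\[
  N_\sigma(X) = \bigcap_{i=1}^n N_{\sigma_i}(X).
\]

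The key step is then to note that, since every descending chain of subspaces of the finite-dimensional space~$F(X)$ stabilises, the family~$\mathcal{S}$ admits a minimal element with respect to inclusion; call it~$N_{\sigma_0}(X)$. Because $\mathcal{S}$ is closed under finite intersection, this minimal element in fact coincides with the intersection of the entire family, that is,
\[
  N_{\sigma_0}(X) = \bigcap_{\sigma \in \Fsn(F)} N_\sigma(X) = N(X).
\]
Taking $\sigma := \sigma_0$ produces the desired finite functorial semi-norm.

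The only point requiring care, and the main (mild) obstacle, is the verification that summing finitely many finite functorial semi-norms indeed yields a finite functorial semi-norm whose vanishing locus is the intersection of the individual ones; but this is straightforward from the definition. Note that finite-dimensionality of~$F(X)$ is essential: without it, a descending family of subspaces need not stabilise, so we could not, in general, replace the full intersection by a finite one. No countability or smallness hypothesis on~$C$ beyond what is already assumed is needed for the construction, since the argument takes place entirely inside the finite-dimensional space~$F(X)$.
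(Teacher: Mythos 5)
Your proposal is correct and uses essentially the same mechanism as the paper: closing the vanishing loci under finite intersection via pointwise sums of finite functorial semi-norms, and invoking finite-dimensionality of~$F(X)$ to terminate. The paper phrases this as an induction that strictly decreases~$\dim_K N_\sigma(X)$ starting from the trivial semi-norm, whereas you select a minimal element of the (set-sized) family of vanishing loci; these are the same argument in different clothing.
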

\begin{proof}
  We proceed inductively, using the following
  observation:
  \begin{itemize}
  \item[]
  If $\sigma \in \Fsn(F)$ with~$N_\sigma(X) \neq N(X)$,
  then there exists a~$\sigma' \in \Fsn(F)$ with
  \[ \dim_K N_{\sigma'}(X) < \dim_K N_\sigma(X).
  \]
  \end{itemize}
  Indeed, if $N_\sigma(X) \neq N(X)$, 
  there exists an~$\alpha \in N_\sigma(X) \setminus N(X)$.
  Hence, there is a finite functorial semi-norm~$\tau$
  on~$F$ with~$|\alpha|_\tau \neq 0$.
  Then also $\sigma' := \sigma + \tau \in \Fsn(F)$ and
  $\alpha$ witnesses that
  \[ N_{\sigma'}(X) \subset N_\sigma(X) \cap N_\tau(X)
     \subsetneq N_\sigma(X).
  \]
  Because of~$\dim_K N_\sigma(X) \leq \dim_K F(X) < \infty$,
  we obtain~$\dim_K N_{\sigma'}(X) < \dim_K N_\sigma(X)$.

  For the actual induction, we start with the trivial
  functorial semi-norm~$\sigma := 0$ on~$F$, which
  satisfies~$N_\sigma(X) = F(X)$. We then iteratedly
  apply the observation above. Because $\dim_K F(X)$
  is finite, this will terminate and lead to a finite
  functorial semi-norm~$\sigma$ on~$F$ with~$N_\sigma(X)
  = N(X)$.
\end{proof}

\phantomsection%
\belowpdfbookmark{Proof of Theorem~\subref{thm:genuffsn}{:fin}}%
                 {proof of main theorem part two}
\begin{proof}[Proof of Theorem~\subref{thm:genuffsn}{:fin}]
    By Remark~\ref{rem:objectsCountable}, we may assume without loss
    of generality, that $\Ob(C)$ is countable.
  For each~$X\in\Ob(C)$,
  let $(\alpha_i)_{i \in I_X}$ be a finite generating set
  of the finite-dimensional $K$-vector space~$F(X)$.
  Then $S := \{ (X,\alpha_i) \mid X \in \Ob(C),\ i \in I_X \}$
  is countable and every $F$-element admits an $S$-representation.

  By Lemma~\ref{lem:uffsnX},
  for each~$X \in \Ob(C)$, we find a functorial semi-norm~$\sigma_X$
  on~$F$ with $N_{\sigma_X}(X) = N(X)$.
  Therefore, for all~$Y \in \Ob(C)$, we have
  \[ \bigcap_{X \in \Ob(C)} N_{\sigma_X}(Y) \subset N(Y).
  \]

  Applying Corollary~\ref{cor:carrySeqSn} to the countable
  set~$\{\sigma_X \mid X \in \Ob(C)\}$ thus shows that there exists a
  universal functorial semi-norm on~$F$.
\end{proof}

\phantomsection%
\belowpdfbookmark{Proof of Corollary~\ref{cor:singhomfin}}%
                 {proof of corollary to main theorem}
\begin{proof}[Proof of Corollary~\ref{cor:singhomfin}]
  Let $T$ be the category of all topological spaces
  that are homotopy equivalent to a finite CW-complex;
  as morphisms in~$T$, we take all continuous maps.

  Every functorial semi-norm on~$H_d(\args;K)$ is homotopy
  invariant in the sense that homotopy equivalences induce
  isometric isomorphisms on~$H_d(\args;K)$. Thus, it suffices
  to show that the functor~$F \colon \htpycat{T} \to \Vect_K$
  on the homotopy category~$\htpycat{T}$ of~$T$ induced by~$H_d(\args;K)$
  admits a universal finite functorial semi-norm.

  As there are only countably many homotopy types of finite 
  CW-complexes (Remark~\ref{rem:countingCW}), the
  category~$\htpycat{T}$ has a skeleton with countably many
  objects. Moreover, $\dim_K H_d(X;K) < \infty$ for all finite
  CW-complexes~$X$.

  Therefore, the second part of Theorem~\ref{thm:genuffsn}
  applies and we obtain that $F$ admits a universal finite
  functorial semi-norm.
\end{proof}

\begin{remark}[counting CW-complexes]\label{rem:countingCW}
  A simple counting argument shows that there are only countably many
  homeomorphism types of finite simplicial complexes.  As every finite
  CW-complex is homotopy equivalent to a finite simplicial
  complex~\cite[Theorem~2C.5]{hatcher},
  it follows that there are only countably many homotopy types of
  finite CW-complexes.

  In contrast, there are uncountably many homotopy types of countable
  CW-complexes.  Looking at the fundamental group and presentation
  complexes shows that there are even uncountably many homotopy types
  of countable $2$-dimensional CW-complexes whose $1$-skeleton is~$S^1
  \lor S^1$ (because there are uncountably many isomorphism types of
  $2$-gen\-er\-at\-ed groups).

  From a constructive point of view, an interesting category of topological
  spaces with a skeleton that has only countably many objects is the
  category of recursively enumerable simplicial complexes.
\end{remark}

\begin{remark}[base change]
  In general, it does not seem to be clear how universal functorial
  semi-norms behave under base change. For example, if $\sigma$ is a
  universal finite functorial semi-norm on a functor~$F$
  to~$\QVectfin$, then it is not clear whether $\R \otimes_\Q \sigma$,
  defined by the object-wise tensor product with the standard norm
  on~$\R$, is universal for~$\R \otimes_\Q F$. Indeed, it is a priori
  not clear how the vanishing loci transform under such base changes.
\end{remark}

\begin{remark}[universal finite functorial semi-norms generated by
    URC-manifolds]\label{rem:URCgrowth}
  Let $d \in \N$, let $M$ be a URC-manifold, and let $S$ be the class
  of fundamental classes of all connected finite-sheeted covering
  manifolds of~$M$ (Example~\ref{exa:URC}).  If $d \geq 2$, then for
  each~$(X,[X]_{\R})$ all covering maps~$X \to M$ have the same
  number of sheets (this can be derived using simplicial volume); we
  denote this number by~$k(X)$. For every~$k \in \N$, there are only
  finitely many homeomorphism types~$S_k$ of~$(X,[X]_\R) \in S$
  with~$k(X) = k$, as can be seen from the classification of coverings
  and the fact that the finitely generated group~$\pi_1(M)$ contains
  only a finite number of subgroups of index~$k$.
  
  Let $v \colon S \to \R_{\geq 0}$. We can thus 
  define the modified function
  \begin{align*}
    \overline v \colon S
    & \to \R_{\geq 0},
    \\
    (X,\alpha)
    & \mapsto \max_{(X',[X']_\R) \in S_{k(X)}} \bigl|[X']_\R\bigr|_v.
  \end{align*}
  By construction~$\overline v \geq v$ and so $|\cdot|_{v}$ is carried
  by~$|\cdot|_{\overline v}$
  (Proposition~\subref{prop:gensn}{:monotonic}).
  
  Hence, Question~\ref{q:explicit} can be reformulated as follows: How
  fast does~$\overline v$ have to grow in the covering degree to
  ensure that $|\cdot|_{\overline v}$ is a universal finite functorial
  semi-norm on~$H_d(\args;\R)$ on the category of topological spaces
  homotopy equivalent to finite CW-complexes?  In view of
  Example~\ref{exa:nonuniversal} and Example~\ref{exa:URC}, we know
  that for~$d \in \{3\} \cup \N_{\geq 5}$, the growth for universal
  examples must be faster than linear.
\end{remark}

\section{Situations without universal finite functorial semi-norms}\label{sec:nonuffsn}

We give an example of a functor to~$\QVectfin$ that does not admit a
universal finite functorial semi-norm
(Proposition~\ref{prop:nonuffsn}). In accordance with
Theorem~\ref{thm:genuffsn}, the domain category will not admit a
skeleton with countably many objects. 

\begin{defi}[the category~$C$]\label{def:exC}
  We define a category~$C$ by:
  \begin{itemize}
  \item 
    We set~$M := (\R_{\geq 1})^\N$ and
    $\Ob(C) := \N \sqcup M.$
  \item
    The only non-identity morphisms in~$C$ are the
    morphisms~$f_{m,v} \colon m \to v$ with~$m \in \N$ and
    $v \in M$.
  \end{itemize}  
\end{defi}
  
\begin{defi}[the functor~$F$]\label{def:exF}
  We define a functor~$F \colon C \to \QVectfin$ as follows:
  \begin{itemize}
  \item For all objects~$X \in \Ob(C)$, we set~$F(X) := \Q$.
  \item For~$m \in \N$ and $v \in M$, we set
    \[ F(f_{m,v}) := d_{m,v} \cdot \id_\Q,
    \]
    where $d_{m,v} := \lceil v(m) \rceil$. 
  \end{itemize}
\end{defi}

We will show that $F \colon C \to \QVectfin$ does not admit a
universal finite functorial semi-norm. To this end we use the
following class to generate functorial semi-norms in the sense of
Proposition~\ref{prop:gensn}:

\begin{defi}[the class~$S$]\label{def:exS}
    For clarity, we denote by $\oneInF{X}$ the element $1\in\Q = F(X)$
    for every object $X\in\Ob(C)$.
    We define \[ S := \bigl\{ (m,\oneInF{m}) \mid m\in\N \bigr\} \]
    and for a function $v\colon\N\to\R_{\geq 0}$, we
    write~$|\cdot|_v := |\cdot|_{(m,\oneInF{m})\mapsto v(m)}$
    for the generated functorial semi-norm on~$F$.
\end{defi}

First we show, that we understand $S$-representations well enough
to compute the generated semi-norms on~$F$:

\begin{lemma}\label{lem:exSassoc}
  In the situation of Definition~\ref{def:exS}, for all~$v\colon \N
  \to \R_{\geq0}$ and $w\colon \N \to \R_{\geq 1}$, we have
  \[ |\oneInF{w}|_v
    = \inf_{m \in \N} \frac1{d_{m,w}} \cdot v(m).
  \]
\end{lemma}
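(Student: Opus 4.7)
The plan is to unfold the definition of the generated functorial semi-norm directly for this very rigid category, and then bracket the value from above and below by elementary computations.

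First, I would identify precisely what an $S$-representation of the $F$-element $(w, \oneInF{w})$ can look like. Since the only non-identity morphisms in $C$ are the $f_{m,v} \colon m \to v$ with $m \in \N$ and $v \in M$, and since $w \in M$, the set of morphisms in $C$ with codomain $w$ starting from an object underlying some element of $S$ (i.e., from some $m \in \N$) consists exactly of the maps $f_{m,w}$. Hence every $S$-representation of $\oneInF{w}$ has the form
\[
  \oneInF{w} \;=\; \sum_{j=1}^N b_j * F(f_{m_j, w})(\oneInF{m_j}) \;=\; \sum_{j=1}^N b_j * d_{m_j, w} \cdot \oneInF{w}
\]
for some $N \in \N$, coefficients $b_j \in \Q$, and indices $m_j \in \N$. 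Translating back to $F(w) = \Q$, this is just the condition $\sum_{j=1}^N b_j * d_{m_j, w} = 1$, with associated cost $\sum_{j=1}^N |b_j| * v(m_j)$.

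For the upper bound, I would observe that for each single $m \in \N$ the one-term sum with $b_1 := 1/d_{m,w}$ and $\alpha_1 := \oneInF{m}$ is a legitimate $S$-representation (this uses $d_{m,w} = \lceil w(m) \rceil \geq 1$, which holds because $w(m) \geq 1$). Its cost is $v(m)/d_{m,w}$, so taking the infimum over $m \in \N$ yields
\[
  |\oneInF{w}|_v \;\leq\; \inf_{m \in \N} \frac{1}{d_{m,w}} \cdot v(m).
\]

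For the matching lower bound, set $c := \inf_{m \in \N} v(m)/d_{m,w}$. Then $v(m) \geq c * d_{m,w}$ for all $m \in \N$, and for any $S$-representation $1 = \sum_{j=1}^N b_j * d_{m_j, w}$ we estimate
\[
  \sum_{j=1}^N |b_j| * v(m_j)
  \;\geq\; c * \sum_{j=1}^N |b_j| * d_{m_j, w}
  \;\geq\; c * \Bigl| \sum_{j=1}^N b_j * d_{m_j, w} \Bigr|
  \;=\; c,
\]
using the triangle inequality in the penultimate step. Taking the infimum over all $S$-representations of $\oneInF{w}$ gives the reverse inequality, and combining both completes the proof. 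There is no real obstacle here: the only point requiring care is the characterisation of $S$-representations, which hinges on the fact that $C$ has no morphisms between objects in $M$ and none from $M$ back into $\N$, so no compositions beyond the trivial $f_{m,w} \after \id_m$ are available.
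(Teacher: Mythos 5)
Your proof is correct and follows essentially the same route as the paper: the one-term representations $\oneInF{w} = \tfrac{1}{d_{m,w}} * F(f_{m,w})(\oneInF{m})$ give the upper bound, and the lower bound comes from factoring the infimum out of the cost of an arbitrary $S$-representation and using the triangle inequality to bound $\sum_j |b_j| * d_{m_j,w} \geq 1$. Your explicit identification of all possible $S$-representations (using the rigidity of $C$) is a slightly more careful spelling-out of a step the paper takes for granted, but the argument is the same.
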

\begin{proof}
  The $S$-representations~$1_w = \frac{1}{d_{m,w}} *
  F(f_{m,w})(1_m)$ for $m\in\N$ show that ``$\leq$'' holds.

  Conversely, every $S$-representation of~$\oneInF{w}$ is
  of the form~$\sum_{j=1}^N b_j \cdot F(f_{m_j,w})(\oneInF{m_j})$
  with certain~$b_j \in \Q$ and $m_j \in \N$.
  In particular,
  \[ 1 = |\oneInF{w}|_\Q
  = \Bigl|\sum_{j =1}^N b_j \cdot d_{m_j,w}\Bigr|_\Q
  \leq \sum_{j=1}^N |b_j|_\Q \cdot d_{m_j,w}
  \]
  and so
  \begin{align*}
    \sum_{j=1}^N |b_j|_\Q \cdot v(m_j)
    & \geq \sum_{j=1}^N |b_j|_\Q \cdot d_{m_j,w}
    \cdot \inf_{m \in \N} \frac1{d_{m,w}} \cdot v(m)
    \\
    & \geq 1 \cdot \inf_{m \in \N} \frac1{d_{m,w}} \cdot v(m).
  \end{align*}
  Taking the infimum over all $S$-representations of~$\oneInF{w}$
  finishes the proof.
\end{proof}
  
\begin{prop}\label{prop:nonuffsn}
  Let $F \colon C \to \QVectfin$ be the functor constructed in
  Definition~\ref{def:exF} on the category from
  Definition~\ref{def:exC}.  Then, there is \emph{no} universal finite
  functorial semi-norm on~$F$.
\end{prop}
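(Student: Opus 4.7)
The plan is to argue by contradiction: assume a universal finite functorial semi-norm~$\sigma$ on~$F$ exists, and produce an object~$w \in M$ on which $\sigma$ is forced to vanish while a competing finite functorial semi-norm detects $\oneInF{w}$.

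First, I would compress $\sigma$ onto the subclass~$S$ via the function $v\colon \N \to \R_{\geq 0}$, $v(m) := |\oneInF{m}|_\sigma$. By Remark~\subref{rem:Ncarry}{:function}, the generated semi-norm $|\cdot|_v$ carries~$\sigma$, so any $w \in M$ with $|\oneInF{w}|_v = 0$ must also satisfy $|\oneInF{w}|_\sigma = 0$.

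Next I would diagonalise in $M = (\R_{\geq 1})^\N$ to produce such a $w$. By Lemma~\ref{lem:exSassoc}, $|\oneInF{w}|_v = \inf_{m\in\N} v(m)/d_{m,w}$. If $v(m_0) = 0$ for some~$m_0$, then any~$w \in M$ already makes this infimum zero; otherwise I set $w(m) := \max\bigl(1,\, (m+1) \cdot v(m)\bigr)$, which lies in~$M$, and a short case distinction on whether $(m+1) v(m) \geq 1$ shows $v(m)/d_{m,w} \leq 1/(m+1) \to 0$. Hence $|\oneInF{w}|_\sigma = 0$ for this particular~$w$.

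Finally, I would rule out universality by exhibiting a $\tau \in \Fsn(F)$ with $|\oneInF{w}|_\tau \neq 0$. Since $F(X) = \Q$ on every object, a finite functorial semi-norm on~$F$ is the same as a family $(a_X)_{X \in \Ob(C)}$ in~$\R_{\geq 0}$ with $|q|_X = a_X \cdot |q|_\Q$, subject only to the functoriality inequalities $a_v \leq a_m/d_{m,v}$ along the morphisms~$f_{m,v}$. Taking $a_w := 1$, $a_m := d_{m,w}$ for all~$m \in \N$, and $a_{v'} := 0$ for all $v' \in M \setminus \{w\}$ satisfies these inequalities (with equality along~$f_{m,w}$ and trivially elsewhere) and yields $|\oneInF{w}|_\tau = 1$, contradicting universality of~$\sigma$. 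The main obstacle, or rather the only substantive idea, is the diagonal construction of~$w$: the uncountable product $(\R_{\geq 1})^\N$ is too large to be dominated by any single function on~$\N$, in accordance with the countability hypothesis of Theorem~\ref{thm:genuffsn} which $C$ violates.
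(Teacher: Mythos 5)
Your argument is correct and follows the paper's proof in its core diagonalisation: both define $v(m) := |\oneInF{m}|_\sigma$, pass to the generated semi-norm $|\cdot|_v$, which carries~$\sigma$ by Remark~\ref{rem:Ncarry}, and then choose $w \in M$ growing fast enough relative to~$v$ (the paper takes $w(m) = m \cdot v(m) + 1$, you take $\max(1,(m+1)v(m))$) so that $|\oneInF{w}|_v = \inf_{m} v(m)/d_{m,w} = 0$ and hence $|\oneInF{w}|_\sigma = 0$. You diverge only in the final witness: the paper uses the functorial semi-norm $|\cdot|_w$ generated by~$w$ itself and invokes the lower-bound direction of Lemma~\ref{lem:exSassoc} to get $|\oneInF{w}|_w \geq \tfrac12$, whereas you build an explicit finite functorial semi-norm~$\tau$ by hand via $a_w = 1$, $a_m = d_{m,w}$, and $a_{v'} = 0$ elsewhere. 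Your version is slightly more elementary: it needs only the observation that a family $(a_X)$ with $a_{v'} \cdot d_{m,v'} \leq a_m$ defines a functorial semi-norm (immediate here, since $C$ has so few morphisms and, by homogeneity, every finite semi-norm on $F(X) = \Q$ is a nonnegative multiple of~$|\cdot|_\Q$), and it avoids the ``conversely'' half of Lemma~\ref{lem:exSassoc}; the paper's version instead stays entirely within the machinery of generated semi-norms, reusing that lemma for both halves of the contradiction. Both arguments are complete, and your auxiliary checks (the case distinction on whether $(m+1)v(m) \geq 1$, and the functoriality of~$\tau$) are sound.
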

\begin{proof}
  \emph{Assume} for a contradiction that $F$ admits a universal
  finite functorial semi-norm~$|\cdot|$.
  Let $S$ be the class from Definition~\ref{def:exS} and let 
  \begin{align*}
    v \colon \N & \to \R_{\geq 0} \\
    m & \mapsto | \oneInF{m} |.
  \end{align*}
  Then, $v$ generates a functorial semi-norm~$|\cdot|_v$
  on~$F$ via~$S$ (Proposition~\ref{prop:gensn}, Definition~\ref{def:exS}).

  We now consider the function
  \begin{align*}
    w \colon \N & \to \R_{\geq 1} \\
    m & \mapsto m \cdot v(m) + 1
  \end{align*}
  and its generated finite functorial semi-norm~$|\cdot|_w$ on~$F$.

  We show that $|\cdot|_w$ is \emph{not} carried by~$|\cdot|$: Let
  $\alpha := \oneInF{w}$. On the one hand, by
  Lemma~\ref{lem:exSassoc}, we obtain
  \[
  |\alpha|_w
  = \inf_{m \in \N} \frac1 {d_{m,w}} \cdot w(m)
  = \inf_{m \in \N} \frac{w(m)}{\lceil w(m)\rceil}
  \geq \frac12.
  \]
  On the other hand, we have
  (Proposition~\subref{prop:gensn}{:roundtrip} and
  Lemma~\ref{lem:exSassoc})
  \[
  |\alpha|
  \leq |\alpha|_v
  = \inf_{m \in \N} \frac1 {d_{m,w}} \cdot v(m)
  =    \inf_{m \in \N} \frac{v(m)}{\lceil m \cdot v(m) + 1\rceil}
  \leq \inf_{m \in \N_{>0}} \frac 1m
  =    0.
  \]
  Hence, $\alpha$ witnesses that $|\cdot|_w$ is not carried
  by~$|\cdot|$.
\end{proof}

It does not seem clear whether this phenomenon could be replicated for
the singular homology functor on the category of topological spaces.

{\small
  \bibliographystyle{alpha}
  \bibliography{bib_uffsn}
  }

\vfill

\noindent
\emph{Clara L\"oh,\\ Johannes Witzig}\\[.5em]
  {\small
  \begin{tabular}{@{\qquad}l}
    Fakult\"at f\"ur Mathematik,
    Universit\"at Regensburg,
    93040 Regensburg\\
    \textsf{clara.loeh@mathematik.uni-r.de}, 
    \textsf{https://loeh.app.ur.de}
    \\
    \textsf{johannes.witzig@mathematik.uni-r.de}
  \end{tabular}}

\end{document}